\newtheorem{thm}{Theorem}[section]
\newtheorem{lem}[thm]{Lemma}
\newtheorem{cor}[thm]{Corollary}
\newtheorem{prop}[thm]{Proposition}
\theoremstyle{definition}
\theoremstyle{remark}
\newtheorem{remark}[thm]{Remark}
\numberwithin{equation}{section}
\begin{document}

\title{Rigidity theorem for compact Bach-flat manifolds with positive constant $\sigma_2$}

\author{Hui-Ya He}
\address{Department of Mathematical Sciences, Tsinghua University, Beijing 100084, P. R. China}
\email{hhy15@mails.tsinghua.edu.cn}

%    Address of record for the research reported here

%    Current address

%    \thanks will become a 1st page footnote.

%    Information for second author
\author{Hai-Ping Fu$^*$}
\address{Department of Mathematics,  Nanchang University, Nanchang 330031, P. R. China}
%\curraddr{Department of Mathematical Sciences, Tsinghua University, Beijing 100084, P. R. China}
\email{mathfu@126.com}
\thanks{Supported by National Natural Science Foundations of China \#11761049,  Jiangxi Province
Natural Science Foundation of China \#20171BAB201001.}

%    General info
\subjclass[2000]{Primary 53C21; Secondary 53C20}

%\date{January 1, 2001 and, in revised form, June 22, 2001.}

%\dedicatory{This paper is dedicated to our advisors.}

\keywords{Bach-flat manifold, Einstein manifold, Schouten tensor}

\begin{abstract}
  We prove that an $n(\geq 4)$-dimensional compact Bach-flat manifold with positive constant $\sigma_2$ is an Einstein manifold, provided that its Weyl curvature  satisfies a suitable pinching condition.
\end{abstract}

\maketitle

\section{Introduction}

\par Let $(M^n,g) (n\geq3)$ be an $n$-dimensional Riemannian manifold with the Riemannian curvature tensor $Rm=\{R_{ijkl}\}$, the Weyl curvature tensor $W=\{W_{ijkl}\}$, the Ricci curvature tensor $Ric=\{R_{ij}\}$ and the scalar curvature $R$. For any manifold of dimension $n\geq4$, the Bach tensor, introduced by Bach \cite{B}, is defined as
\begin{equation}
B_{ij}\equiv\frac{1}{n-3}\nabla^k\nabla^lW_{ikjl}+\frac{1}{n-2}R^{kl}W_{ikjl},
\end{equation}
where $\nabla$ is the operator of covariant differentiation on $M^n.$    Here and hereafter the Einstein convention of summing over the repeated indices will be adopted. Recall that a metric $g$ is called Bach-flat  and the manifold is called Bach-flat manifold if the Bach tensor vanishes.  It is easy to see that $(M^n,g)(n\geq 4)$ is a Bach-flat manifold,  if it is either a locally conformally flat manifold, or an Einstein manifold.

%$(M^4,g)$ is Bach-flat if and only if it is a critical metric of the functional (see \cite{Be}) $$W:g\mapsto\int_{M}{|W_g|}^2dV_g.$$

The curvature pinching phenomenon plays an important role in global differential geometry. Some isolation theorems of the Weyl curvature tensor of positive Ricci Einstein manifolds are given in \cite{{HV},{IS},{S}}, when its $L^{\frac{n}{2}}$-norm is small. Recently, two rigidity theorems of the Weyl curvature tensor of positive Ricci Einstein manifolds are given
in \cite{{C},{FX2},{FX3}}, which improve results due to \cite{{HV},{IS},{S}}.  The second author and Xiao have studied compact manifolds with harmonic curvature to obtain some rigidity results in \cite{{F},{F2},{FX}}. Here when a  Riemannian manifold satisfies $\delta Rm= \{\nabla^lR_{ijkl}\}=0$, we call it a manifold with harmonic curvature.
Bach-flat manifolds have been studied by many authors. For any complete Bach-flat manifold, Kim \cite{K} has studied some rigidity phenomena  and derived that a complete  Bach-flat manifold $M^4$ with nonnegative constant scalar curvature and  positive Yamabe constant is an Einstein manifold if the $L^2$-norm of the trace-free Riemannian curvature tensor $\mathring{Rm}$ is small enough. Later, Chu \cite{Ch} improved Kim's result and showed that $M^4$ is in fact a space of constant curvature under the same assumptions.  For a compact Bach-flat manifold $M^4$ with the positive Yamabe constant, Chang et al. \cite{CQY} proved that $M^4$ is conformal equivalent to the standard four-sphere provided that the $L^2$-norm of the Weyl curvature tensor $W$ is small enough, and also showed that there is only finite diffeomorphism class with a bounded $L^2$-norm of $W$. Peng and the second author \cite{FP} showed that the  compact Bach-flat manifold with positive constant scalar curvature is spherical space form or Einstein manifold under some $L^p$ pinching conditions or some pointwise pinching conditions. For  compact manifolds  with the positive Yamabe constant, Chang et al. \cite{CGY} proved a sharp form of differentiable sphere theorem.

For a Riemannian manifold $(M^n, g)(n\geq3)$, we denote by $\sigma_2(A_g)$ the $2$nd-elementary symmetric
function of the eigenvalues of the so-called Schouten tensor $A_g:=Ric-\frac{R}{2(n-1)}g$ with respect to $g$. Hence
\begin{equation}
\sigma_2(A_g)=\frac{1}{2}\left((trA_g)^2-|A_g|^2\right)=\frac{1}{2}\left(\frac{(n-2)^2}{4n(n-1)}R^2-|\mathring{Ric}|^2\right),\label{00011}
\end{equation}
where $\mathring{Ric}:=Ric-\frac{R}{n}g$ denotes the trace-free Ricci curvature tensor.

Our main result in this paper is the following:
\begin{thm}\label{000001}
Let $(M^n,g)(n\geq4)$ be an n-dimensional  compact Bach-flat  manifold with positive constant $\sigma_2(A_g)$. If
\begin{eqnarray}
|W|^2+\frac{n^2(n-4)}{2(n-2)^3}|\mathring{Ric}|^2< \frac{4n}{(n-2)^3}\sigma_2(A_g),\label{11111}
  \end{eqnarray}
where $\mathring{Ric}:=Ric-\frac{R}{n}g$ is the trace-free Ricci curvature tensor, then $M^n$ is an Einstein manifold.
\end{thm}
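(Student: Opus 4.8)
The plan is to convert the Bach-flat equation, together with the constancy of $\sigma_2(A_g)$, into an integral inequality for the trace-free Ricci tensor whose sign is dictated by \eqref{11111}. First I would extract the analytic content of the hypotheses. Because $\sigma_2(A_g)$ is a positive constant, \eqref{00011} gives $\frac{(n-2)^2}{4n(n-1)}R^2-|\mathring{Ric}|^2=2\sigma_2(A_g)>0$ pointwise, so $R$ never vanishes and has a fixed sign; I take $R>0$. Differentiating this identity yields
$$\frac{(n-2)^2}{4n(n-1)}\nabla R^2=\nabla|\mathring{Ric}|^2,$$
which expresses $\nabla R$ through $\mathring{Ric}$ and $\nabla\mathring{Ric}$ and is the substitute for the hypothesis ``$R$ constant'' used in the constant-scalar-curvature theory. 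Substituting \eqref{00011} into \eqref{11111} and simplifying (the coefficient of $|\mathring{Ric}|^2$ collapses to $\frac{n}{2(n-2)}$), the pinching condition takes the equivalent clean form
$$|W|^2+\frac{n}{2(n-2)}|\mathring{Ric}|^2<\frac{R^2}{2(n-1)(n-2)},$$
which in particular bounds both $|W|/R$ and $|\mathring{Ric}|/R$ pointwise by explicit dimensional constants.

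Next I would produce the basic integral identity. Writing the Bach tensor through the Cotton tensor $C_{ijk}=\nabla_kA_{ij}-\nabla_jA_{ik}$ and using that $W$ is trace-free (so $R^{kl}W_{ikjl}=\mathring R^{kl}W_{ikjl}$), one has, up to the standard index conventions, $(n-2)B_{ij}=\nabla^kC_{ijk}+W_{ikjl}\mathring R^{kl}$. I would pair the Bochner formula $\frac12\Delta|\mathring{Ric}|^2=|\nabla\mathring{Ric}|^2+\langle\mathring{Ric},\Delta\mathring{Ric}\rangle$ with the commuted second Bianchi identity, which expresses $\Delta\mathring R_{ij}$ in terms of $\nabla^2R$, the Weyl coupling $W_{ikjl}\mathring R^{kl}$, the cubic term $\mathring R_i{}^j\mathring R_j{}^k\mathring R_k{}^i$, and $R|\mathring{Ric}|^2$. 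The Bach-flat relation $B_{ij}=0$ is exactly what permits trading the awkward second derivatives of $W$ for these lower-order curvature quantities. Integrating over the compact $M$ and integrating by parts, every occurrence of $\nabla R$ and $\nabla^2R$ is then removed using the differentiated $\sigma_2$-identity of the previous paragraph; crucially, that identity forces each such term to carry an extra factor of $|\mathring{Ric}|/R$, so by the clean pinching these remainders are subdominant relative to $\int_M|\nabla\mathring{Ric}|^2$.

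At this point I would invoke the sharp algebraic estimates $W_{ikjl}\mathring R^{ij}\mathring R^{kl}\le\sqrt{\tfrac{n-2}{2(n-1)}}\,|W|\,|\mathring{Ric}|^2$ and $\mathring R_i{}^j\mathring R_j{}^k\mathring R_k{}^i\le\tfrac{n-2}{\sqrt{n(n-1)}}\,|\mathring{Ric}|^3$, together with a refined Kato inequality for $|\nabla\mathring{Ric}|^2$ should any gradient remainder need absorbing. Assembling everything yields an inequality of the shape
$$\int_M|\nabla\mathring{Ric}|^2+\int_M\Big[c_nR-c_n'|W|-c_n''|\mathring{Ric}|\Big]|\mathring{Ric}|^2\le 0,$$
in which every term has the same homogeneity. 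Applying the clean pinching pointwise through Cauchy--Schwarz, with the weights chosen to match $c_n'$ and $c_n''$, makes the bracket strictly positive wherever $\mathring{Ric}\neq0$. Since both integrals are then nonnegative while their sum is $\le0$, I conclude $\mathring{Ric}\equiv0$, i.e. $M^n$ is Einstein.

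I expect the principal obstacle to be the non-constancy of the scalar curvature. In the constant-scalar-curvature Bach-flat theory one has $\nabla R=\nabla^2R=0$, so the scalar-curvature derivatives disappear from both the Cotton tensor and the Bochner formula; here only $\sigma_2(A_g)$ is constant, so I must carry $\nabla R$ and $\nabla^2R$ through every integration by parts and tame them using solely the relation $\frac{(n-2)^2}{4n(n-1)}\nabla R^2=\nabla|\mathring{Ric}|^2$. Organizing the resulting error terms so that they are absorbed into $\int_M|\nabla\mathring{Ric}|^2$ and $\int_M|\mathring{Ric}|^2$ with precisely the constants for which \eqref{11111} closes the estimate is the delicate heart of the argument.
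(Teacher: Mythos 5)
Your overall architecture coincides with the paper's: a Weitzenb\"ock formula for $|\mathring{Ric}|^2$ driven by the Bach-flat identity, integration by parts, the constancy of $\sigma_2(A_g)$ used to absorb the scalar-curvature gradient, and the reformulated pinching $|W|^2+\frac{n}{2(n-2)}|\mathring{Ric}|^2<\frac{R^2}{2(n-1)(n-2)}$, which is exactly the paper's \eqref{300001}. Your treatment of $\nabla R$ is sound: Cauchy--Schwarz applied to the differentiated identity $\frac{(n-2)^2}{4n(n-1)}R\,R_{,k}=\mathring R_{ij}\mathring R_{ij,k}$, combined with $|\mathring{Ric}|^2\le\frac{(n-2)^2}{4n(n-1)}R^2$ (from $\sigma_2>0$), gives pointwise $\frac{(n-2)^2}{4n(n-1)}|\nabla R|^2\le|\nabla\mathring{Ric}|^2$, which is precisely the Kato-type inequality \eqref{777888} the paper imports from Hu--Li--Simon, with exactly the constant needed to cancel the term $-\frac{(n-2)^2}{4n(n-1)}\int|\nabla R|^2$ produced by integrating $\frac{n-2}{2(n-1)}\mathring R_{ij}R_{,ij}$ by parts. (Note the absorption is exactly tight, not ``subdominant''.)

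The genuine gap is in your final algebraic step. You estimate the Weyl coupling and the cubic term \emph{separately}, via $|W_{ijkl}\mathring R_{ik}\mathring R_{jl}|\le\sqrt{\tfrac{n-2}{2(n-1)}}\,|W|\,|\mathring{Ric}|^2$ and $|\mathring R_{ij}\mathring R_{jk}\mathring R_{ki}|\le\tfrac{n-2}{\sqrt{n(n-1)}}|\mathring{Ric}|^3$. Each is sharp in isolation, but the quantity to be dominated by $\frac{R}{n-1}|\mathring{Ric}|^2$ is $2\bigl|-W_{ijkl}\mathring R_{ik}\mathring R_{jl}+\tfrac{n}{2(n-2)}\mathring R_{ij}\mathring R_{jk}\mathring R_{ki}\bigr|$, and summing your two bounds gives $\sqrt{\tfrac{2(n-2)}{n-1}}|W|+\sqrt{\tfrac{n}{n-1}}|\mathring{Ric}|$ (times $|\mathring{Ric}|^2$), whose maximum subject to the pinching constraint is $\tfrac{\sqrt2\,R}{n-1}$ --- off by a factor of $\sqrt2$, with no weights available to fix it. Since Remark 1.2 shows the hypothesis \eqref{11111} is optimal (equality on $N^1\times N^{n-1}(c)$), there is no slack to absorb this loss, and your bracket need not be positive. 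The paper closes the estimate with the \emph{combined} inequality of Lemma \ref{22003}: write the whole expression as $-\frac14\bigl(W+\tfrac{n}{4(n-2)}\mathring{Ric}\circledwedge g\bigr)_{ijkl}(\mathring{Ric}\circledwedge\mathring{Ric})_{ijkl}$, decompose $\mathring{Ric}\circledwedge\mathring{Ric}=T+V'+U'$ orthogonally, use that $W$ is totally trace-free (hence orthogonal to $\mathring{Ric}\circledwedge g$ and to $V', U'$) and the identity $|T|^2+\tfrac n2|V'|^2=\tfrac{8(n-2)}{n-1}|\mathring{Ric}|^4$, and apply a single Cauchy--Schwarz to get the bound $\sqrt{\tfrac{n-2}{2(n-1)}}|\mathring{Ric}|^2\bigl(|W|^2+\tfrac{n}{2(n-2)}|\mathring{Ric}|^2\bigr)^{1/2}$, which is calibrated exactly to \eqref{300001}. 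You need this joint estimate (or an equivalent) for the argument to close.
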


\begin{remark}
The pinching condition of Theorem \ref{000001} is optimal. When $M^n=N_1^1\times N_2^{n-1}(c),$ it is easy to compute that $\sigma_2(A_g)=\frac{(n-4)(n-1)(n-2)^2}{8}c^2$ and $|\mathring{Ric}|^2=\frac{(n-1)(n-2)^2}{n}c^2$. In this case the equality in \eqref{11111} holds.
\end{remark}

\begin{cor}\label{00003}
Let $(M^4,g)$ be a 4-dimensional compact Bach-flat manifold with positive scalar curvature and positive constant $\sigma_2(A_g)$. If
\begin{equation}
|W|^2<2\sigma_2(A_g),\label{77777}
\end{equation}
then $M^4$ is isometric to a quotient of the round $\mathbb{S}^4$.
\end{cor}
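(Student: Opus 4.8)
The plan is to deduce Corollary \ref{00003} from Theorem \ref{000001} and then promote the Einstein conclusion to constant curvature. First I would substitute $n=4$ into the pinching inequality \eqref{11111}. The coefficient $\frac{n^2(n-4)}{2(n-2)^3}$ of $|\mathring{Ric}|^2$ is $0$, and $\frac{4n}{(n-2)^3}=2$, so \eqref{11111} collapses to $|W|^2<2\sigma_2(A_g)$, which is precisely hypothesis \eqref{77777}. Thus Theorem \ref{000001} applies and $(M^4,g)$ is Einstein, i.e. $\mathring{Ric}\equiv0$. Feeding $\mathring{Ric}=0$ and $n=4$ into \eqref{00011} gives $\sigma_2(A_g)=\frac{R^2}{24}$; as $\sigma_2(A_g)$ is a positive constant and $R>0$, the scalar curvature $R$ is a positive constant, and \eqref{77777} turns into the pointwise Weyl bound $|W|^2<\frac{R^2}{12}$.

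Next I would show this bound forces $W\equiv0$. On an Einstein $4$-manifold the contracted second Bianchi identity renders $W$ divergence-free, and I would split $W=W^++W^-$ into self-dual and anti-self-dual parts, each a trace-free symmetric endomorphism of the rank-$3$ bundle $\Lambda^{\pm}$. A Weitzenb\"ock formula of the form
$$\tfrac12\Delta|W^{\pm}|^2=|\nabla W^{\pm}|^2+c_1R\,|W^{\pm}|^2+c_2\det W^{\pm},\qquad c_1>0,$$
holds with universal constants. The algebraic input is the sharp estimate $|\det A|\le\frac{1}{3\sqrt6}|A|^3$ for trace-free symmetric $3\times3$ matrices, combined with the superadditivity $|W^+|^3+|W^-|^3\le(|W^+|^2+|W^-|^2)^{3/2}$. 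Summing the two identities, integrating over the closed manifold so that the Laplacian term disappears, and applying these inequalities produces
$$0\ge\int_M\Big(|\nabla W|^2+|W|^2\big(c_1R-c\,|W|\big)\Big)\,dV$$
for an explicit $c>0$. The pinching $|W|^2<\frac{R^2}{12}$ is exactly the condition making the zeroth-order factor $c_1R-c|W|$ positive wherever $W\ne0$, so every integrand vanishes and $W\equiv0$.

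Finally, $W\equiv0$ in dimension $4$ means $(M^4,g)$ is locally conformally flat; since it is also Einstein, its Schouten tensor equals $\frac{R}{2n(n-1)}g$ and the full curvature reduces to the constant-curvature model with sectional curvature $\frac{R}{n(n-1)}=\frac{R}{12}>0$. A compact manifold of constant positive curvature is a spherical space form, hence isometric to a quotient of the round $\mathbb{S}^4$. The step I expect to be the real obstacle is the bookkeeping in the second paragraph: fixing the normalization that relates the $(0,4)$-norm $|W|^2$ appearing in \eqref{77777} to the endomorphism norms $|W^{\pm}|^2$, and the constants $c_1,c_2$ in the Weitzenb\"ock formula, so that the sharp determinant estimate and the superadditivity inequality combine to the exact threshold $\frac{R^2}{12}$ and not a weaker constant. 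Treating $W^+$ and $W^-$ jointly through the superadditivity trick, rather than bounding each separately, is what lets the single scalar hypothesis \eqref{77777} suffice.
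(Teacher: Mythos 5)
Your reduction to the Einstein case is exactly the paper's: for $n=4$ the coefficient of $|\mathring{Ric}|^2$ in \eqref{11111} vanishes and $\frac{4n}{(n-2)^3}=2$, so Theorem \ref{000001} applies verbatim, and then \eqref{00011} with $\mathring{Ric}=0$ turns \eqref{77777} into $|W|^2<\frac{R^2}{12}$ with $R$ a positive constant. Where you diverge is the final step: the paper simply invokes Theorem 1.8 of \cite{F} to pass from ``Einstein with $|W|^2<\frac{R^2}{12}$'' to the spherical space form, whereas you inline a proof via the Weitzenb\"ock formula for $W^{\pm}$ on an Einstein $4$-manifold, the sharp determinant bound $|\det A|\le\frac{1}{3\sqrt{6}}|A|^3$ for trace-free symmetric $3\times3$ matrices, and the superadditivity $|W^+|^3+|W^-|^3\le(|W^+|^2+|W^-|^2)^{3/2}$. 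That argument is sound (it is essentially Derdzi\'nski's classical isolation estimate, and is presumably close to what underlies the cited theorem), and it makes the corollary self-contained at the cost of the normalization bookkeeping you yourself flag. One correction to your accounting: with the paper's convention $|W|^2=W_{ijkl}W_{ijkl}$ one has $|W|^2=4\left(|W^+|_{\mathrm{end}}^2+|W^-|_{\mathrm{end}}^2\right)$, and your chain of inequalities forces $W\equiv0$ under the weaker hypothesis $|W|^2<\frac{R^2}{6}$ (the $\mathbb{CP}^2$ threshold), not ``exactly'' at $\frac{R^2}{12}$; since $\frac{R^2}{12}<\frac{R^2}{6}$ this only helps you, but the claim that the pinching is sharp for this step is not right. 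You should also note that the self-dual/anti-self-dual splitting requires orientability, which is handled by passing to the oriented double cover.
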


\begin{cor}\label{000002}
Let $(M^4,g)$ be a 4-dimensional compact locally conformally flat manifold with positive scalar curvature and positive constant $\sigma_2(A_g)$. Then $M^4$  is isometric to a quotient of the round $\mathbb{S}^4$.
\end{cor}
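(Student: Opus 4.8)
The plan is to obtain Corollary \ref{000002} as an immediate specialization of Corollary \ref{00003}. The crucial observation is that a locally conformally flat manifold has vanishing Weyl curvature tensor, $W\equiv 0$, and—as already recorded in the introduction—is therefore automatically Bach-flat. Thus all the structural hypotheses of Corollary \ref{00003} are in place: $(M^4,g)$ is a compact Bach-flat $4$-manifold with positive scalar curvature and positive constant $\sigma_2(A_g)$.

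Next I would check the pinching inequality \eqref{77777} demanded by Corollary \ref{00003}. Since $W\equiv 0$ we have $|W|^2=0$, while the hypothesis $\sigma_2(A_g)>0$ gives $2\sigma_2(A_g)>0$. Hence
$$|W|^2 = 0 < 2\sigma_2(A_g),$$
so \eqref{77777} holds with room to spare. Applying Corollary \ref{00003} then yields that $M^4$ is isometric to a quotient of the round $\mathbb{S}^4$, which is exactly the assertion.

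Alternatively one can bypass Corollary \ref{00003} and argue directly from Theorem \ref{000001}: setting $n=4$ makes the coefficient $\frac{n^2(n-4)}{2(n-2)^3}$ vanish and reduces the right-hand threshold $\frac{4n}{(n-2)^3}\sigma_2(A_g)$ to $2\sigma_2(A_g)$, so that condition \eqref{11111} becomes precisely $|W|^2<2\sigma_2(A_g)$. With $W\equiv 0$ this is satisfied, and Theorem \ref{000001} forces $(M^4,g)$ to be Einstein; a compact locally conformally flat Einstein manifold with positive scalar curvature has constant positive sectional curvature and is therefore a spherical space form.

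Since every hypothesis is either given or follows at once from $W\equiv 0$, there is essentially no analytic obstacle here—the only points to verify are the standard facts that local conformal flatness implies both $W=0$ and Bach-flatness. The substantive work has already been carried out in establishing Theorem \ref{000001} and Corollary \ref{00003}; this corollary merely specializes them to the conformally flat setting, where the Weyl pinching is trivially strict.
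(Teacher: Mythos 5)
Your proposal is correct and follows essentially the same route as the paper: local conformal flatness gives $W\equiv 0$ and Bach-flatness, so the pinching condition \eqref{77777} holds trivially because $\sigma_2(A_g)>0$, and Corollary \ref{00003} applies directly. The alternative argument via Theorem \ref{000001} that you sketch is also sound but is not needed.
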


\begin{remark}
In \cite{HLS},  Hu-Li-Simon proved for a compact locally conformally flat manifold $(M^n,g)$ with constant non-zero $\sigma_k(A_g)$ for some $k\in\{2,3,\ldots, n\},$ if the tensor $A_g$ is semi-positive definite, then $(M^n,g)$ is a space form of positive sectional curvature. We enhance this result when $n=4.$
\end{remark}

\begin{cor}\label{00000004}
Let $(M^n,g)(n\geq5)$ be an n-dimensional compact locally conformally flat manifold with positive scalar curvature and positive constant $\sigma_2(A_g)$. If
\begin{eqnarray}
|\mathring{Ric}|^2<\frac{1}{n(n-1)}R^2,\label{1111121}
  \end{eqnarray}
then $M^n$  is isometric to a quotient of the round $\mathbb{S}^n$.
\end{cor}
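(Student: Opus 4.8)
The plan is to deduce Corollary \ref{00000004} directly from the Main Theorem (Theorem \ref{000001}). A locally conformally flat manifold has vanishing Weyl tensor, $W\equiv 0$, and is automatically Bach-flat (as recalled in the Introduction), so all the structural hypotheses of Theorem \ref{000001} are already in place. The entire content of the corollary therefore reduces to showing that, once $W\equiv 0$, the pinching inequality \eqref{11111} is \emph{exactly} the assumption \eqref{1111121}, and then identifying the resulting Einstein manifold.

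First I would set $W=0$ in \eqref{11111}, so that the inequality to be verified reads
\[
\frac{n^2(n-4)}{2(n-2)^3}\,|\mathring{Ric}|^2<\frac{4n}{(n-2)^3}\,\sigma_2(A_g).
\]
Substituting the formula \eqref{00011} for $\sigma_2(A_g)$ into the right-hand side gives the term $\frac{1}{2(n-1)(n-2)}R^2-\frac{2n}{(n-2)^3}|\mathring{Ric}|^2$; collecting the $|\mathring{Ric}|^2$ terms on the left, the condition becomes
\[
\frac{n^2(n-4)+4n}{2(n-2)^3}\,|\mathring{Ric}|^2<\frac{1}{2(n-1)(n-2)}\,R^2.
\]
The key simplification is the factorization $n^2(n-4)+4n=n(n-2)^2$, which collapses the coefficient on the left to $\frac{n}{2(n-2)}$; clearing the common factor $2(n-2)$ then yields precisely $|\mathring{Ric}|^2<\frac{1}{n(n-1)}R^2$, i.e. \eqref{1111121}. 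Hence Theorem \ref{000001} applies and $M^n$ is Einstein.

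It then remains to upgrade ``Einstein and locally conformally flat'' to ``space form.'' Being Einstein forces $\mathring{Ric}\equiv 0$ and (via the contracted second Bianchi identity, for $n\geq 3$) makes the scalar curvature constant; combined with $W\equiv 0$, the standard orthogonal decomposition of the curvature tensor collapses to
\[
R_{ijkl}=\frac{R}{n(n-1)}\bigl(g_{ik}g_{jl}-g_{il}g_{jk}\bigr),
\]
so $M^n$ has constant sectional curvature $\frac{R}{n(n-1)}$, which is positive since $R>0$. A compact (hence complete) manifold of constant positive sectional curvature is a spherical space form, i.e. isometric to a quotient of the round $\mathbb{S}^n$, which is the claim.

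The argument presents no genuine obstacle beyond the Main Theorem itself: the only delicate point is the algebraic reduction in the second step, where the cancellation $n^2(n-4)+4n=n(n-2)^2$ is exactly what makes \eqref{11111} fall onto \eqref{1111121} rather than onto a strictly stronger or weaker inequality. One should also confirm, before invoking the classification, that $R$ is a genuine positive constant; this is guaranteed both by the Einstein condition and, independently, by the hypothesis that $\sigma_2(A_g)$ is a positive constant together with $\mathring{Ric}\equiv 0$, since then $\sigma_2(A_g)=\frac{(n-2)^2}{8n(n-1)}R^2$.
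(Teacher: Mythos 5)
Your proposal is correct and follows essentially the same route as the paper: both reduce the hypothesis \eqref{1111121} to the pinching condition \eqref{11111} with $W\equiv 0$ (your algebra, including the factorization $n^2(n-4)+4n=n(n-2)^2$, checks out and matches the paper's equivalent form $|\mathring{Ric}|^2<\frac{8}{n(n-4)}\sigma_2(A_g)$), invoke Theorem \ref{000001} to get Einstein, and then use $W\equiv 0$ plus constant positive scalar curvature to identify $M^n$ as a spherical space form. Your write-up is merely more explicit than the paper's in the final classification step.
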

\begin{remark}
In \cite{FX}, Xiao and the second author proved that an n-dimensional compact locally conformally flat manifold $(M^n,g)(n\geq4)$ with  positive constant scalar curvature is isometric to a quotient of the round $\mathbb{S}^n$, if $\left(\int_{M^n}|\mathring{Ric}|^{\frac n2}\right)^{\frac 2n}<\frac{1}{n(n-1)}Y(M^n, [g])$, where $Y(M^n, [g])$ denotes the Yamabe constant of $(M^n,g)$.
\end{remark}

{Acknowledgement:} The authors are very grateful to Professor Haizhong Li for his guidance and constant support.

\section{Proof of Theorem \ref{000001}}
In what follows, we adopt, without further comment, the moving frame notation with respect to a chosen local orthonormal frame.

Let $(M^n,g)(n\geq3)$ be an $n$-dimensional  compact Riemannian manifold. Decomposing  the Riemannian curvature tensor  into irreducible components (see \cite{Be}, Chapter 1, Section G) yields
\begin{eqnarray}
R_{ijkl}&=&W_{ijkl}+\frac{1}{n-2}(R_{ik}\delta_{jl}-R_{il}\delta_{jk}+R_{jl}\delta_{ik}-R_{jk}\delta_{il})\label{00}\\\nonumber
&&-\frac{R}{(n-1)(n-2)}(\delta_{ik}\delta_{jl}-\delta_{il}\delta_{jk})\\\nonumber
&=&W_{ijkl}+\frac{1}{n-2}(\mathring{R}_{ik}\delta_{jl}-\mathring{R}_{il}\delta_{jk}+\mathring{R}_{jl}\delta_{ik}-\mathring{R}_{jk}\delta_{il})\\\nonumber
&&+\frac{R}{n(n-1)}(\delta_{ik}\delta_{jl}-\delta_{il}\delta_{jk})\\\nonumber
&=&W_{ijkl}+\frac{1}{n-2}(A_{ik}\delta_{jl}-A_{il}\delta_{jk}+A_{jl}\delta_{ik}-A_{jk}\delta_{il}),
\end{eqnarray}
where $R$  is the scalar curvature, $R_{ijkl}$, $W_{ijkl}$, $R_{ij}$, $\mathring{R}_{ij}$  and $A_{ij}$  denote the components of $Rm$, the Weyl curvature tensor $W$,  the Ricci curvature tensor $Ric$, the trace-free Ricci curvature tensor $\mathring{Ric}=Ric-\frac{R}{n}g$  and  the Schouten tensor $A=Ric-\frac{R}{2(n-1)}g$, respectively.

Let $e_1,\ldots,e_n$ be a local orthonormal frame field on $M^n,$ $\omega_1,\ldots,\omega_n$ its dual coframe field, $\phi=\sum_{i,j}\phi_{ij}\omega_i  \otimes      \omega_j$ be a symmetric (0,2)-type tensor defined on $M^n.$   By letting  $\phi_{ij,k}:=\nabla_k\phi_{ij},$ $\phi_{ij,kl}:=\nabla_l\nabla_k\phi_{ij},$ where $\nabla$ is the operator of covariant differentiation on $M^n,$
 we have the following Ricci identities
\begin{eqnarray}
\phi_{ij,kl}-\phi_{ij,lk}=\phi_{mj}R_{mikl}+\phi_{im}R_{mjkl}.\label{11028}
  \end{eqnarray}

The norm of a $(0,4)$-type tensor $T$ is defined as $$|T|^2=|T_{ijkl}|^2=T_{ijkl}T_{ijkl}.$$

By the second Bianchi identity
\begin{equation*}
R_{imkl,j}+R_{imlj,k}+R_{imjk,l}=0,
  \end{equation*}
we have
\begin{equation}
R_{ij,k}-R_{ik,j}=R_{likj,l},\label{11002}
  \end{equation}
and
\begin{equation}
R_{ik,i}=\frac{1}{2}R_{,k}.\label{11003}
  \end{equation}
Then
\begin{equation}
\mathring{R}_{ij,k}-\mathring{R}_{ik,j}=R_{likj,l}+\frac{R_{,j}}{n}\delta_{ik}-\frac{R_{,k}}{n}\delta_{ij},\label{11004}
  \end{equation}
and
\begin{equation}
\mathring{R}_{ik,i}=\frac{n-2}{2n}R_{,k}.\label{11005}
  \end{equation}
By \eqref{11002} and \eqref{11003}, we have
\begin{eqnarray*}
W_{likj,l} &=&  R_{likj,l}     -\frac{1}{n-2}(R_{lk,l}\delta_{ij}-R_{lj,l}\delta_{ik}+R_{ij,l}\delta_{lk}-R_{ik,l}\delta_{lj})+\frac{R_{,l}}{(n-1)(n-2)}(\delta_{lk}\delta_{ij}-\delta_{lj}\delta_{ik})\\
 &=&      R_{ij,k}-R_{ik,j}                -\frac{1}{n-2}(R_{lk,l}\delta_{ij}-R_{lj,l}\delta_{ik}+R_{ij,k}-R_{ik,j})\\
 &&   +\frac{R_{,k}}{(n-1)(n-2)}\delta_{ij}     -\frac{R_{,j}}{(n-1)(n-2)}\delta_{ik}\\
&=&      R_{ij,k}-R_{ik,j}                -\frac{1}{2(n-2)}(R_{,k}\delta_{ij}-R_{,j}\delta_{ik})-\frac{1}{n-2}(R_{ij,k}-R_{ik,j})\\
 &&   +\frac{R_{,k}}{(n-1)(n-2)}\delta_{ij}     -\frac{R_{,j}}{(n-1)(n-2)}\delta_{ik}\\
&=&   \frac{n-3}{n-2}  ( R_{ij,k}-R_{ik,j})              +\frac{n-3}{2(n-1)(n-2)}(R_{,j}\delta_{ik}-R_{,k}\delta_{ij})\\
&=&   \frac{n-3}{n-2}  R_{likj,l}              +\frac{n-3}{2(n-1)(n-2)}(R_{,j}\delta_{ik}-R_{,k}\delta_{ij}).
  \end{eqnarray*}
If $M^n$ is Bach-flat, i.e.,
\begin{equation*}
B_{ij}=\frac{1}{n-3}W_{likj,lk}+\frac{1}{n-2}R_{lk}W_{likj}=0,
  \end{equation*}
then from the above we have
\begin{equation}
  R_{likj,lk}=\frac{1}{2(n-1)}(R_{,kk}\delta_{ij}-R_{,ij})-R_{lk}W_{likj}.\label{11006}
  \end{equation}

In order to prove Theorem  \ref{000001}, we need the following lemmas:
\begin{lem}\label{22001}
Let $(M^n,g)(n\geq4)$ be an n-dimensional  compact Bach-flat Riemannian manifold, then
\begin{equation}
\frac{1}{2}\Delta |\mathring{Ric}|^2 =   |\nabla \mathring{Ric}|^2+     \frac{n-2}{2(n-1)} \mathring{R}_{ij} R_{,ij}- 2 \mathring{R}_{ij}\mathring{R}_{lk}W_{likj}+\frac{n}{n-2}   \mathring{R}_{hj}\mathring{R}_{ij}\mathring{R}_{ih} +\frac{R}{n-1}|\mathring{Ric}|^2.\label{666777}
\end{equation}
\end{lem}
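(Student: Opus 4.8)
The plan is to start from the elementary Bochner identity
\[
\frac{1}{2}\Delta|\mathring{Ric}|^2 = |\nabla\mathring{Ric}|^2 + \mathring{R}_{ij}\mathring{R}_{ij,kk},
\]
so that the entire content of the lemma reduces to computing the rough Laplacian $\mathring{R}_{ij,kk}=\Delta\mathring{R}_{ij}$ and then contracting it against $\mathring{R}_{ij}$. The Bach-flat hypothesis will enter at exactly one point, through the expression \eqref{11006} for $R_{likj,lk}$, and everything else is a Weitzenb\"ock-type commutation computation fed by the second Bianchi identity.

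To compute the rough Laplacian I would apply $\nabla_k$ to the contracted second Bianchi identity \eqref{11004} and trace over $k$, which yields
\[
\mathring{R}_{ij,kk} = \mathring{R}_{ik,jk} + R_{likj,lk} + \frac{1}{n}R_{,ij} - \frac{1}{n}\Delta R\,\delta_{ij}.
\]
The term $\mathring{R}_{ik,jk}$ is then rewritten by commuting the two covariant derivatives: the Ricci identity \eqref{11028} converts $\mathring{R}_{ik,jk}-\mathring{R}_{ik,kj}$ into the two curvature contractions $\mathring{R}_{mk}R_{mijk}+\mathring{R}_{im}R_{mj}$, while the remaining piece $\mathring{R}_{ik,kj}$ is handled by the contracted Bianchi identity \eqref{11005}, giving $\frac{n-2}{2n}R_{,ij}$. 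Substituting the Bach-flat formula \eqref{11006} for $R_{likj,lk}$ then expresses $\mathring{R}_{ij,kk}$ entirely in terms of $R_{,ij}$, $\Delta R\,\delta_{ij}$, the curvature contractions, and the Weyl term $-R_{lk}W_{likj}$.

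Contracting with $\mathring{R}_{ij}$ is where the formula takes shape. First, every term carrying a factor $\delta_{ij}$ (in particular the $\Delta R\,\delta_{ij}$ piece) dies against the trace-free $\mathring{R}_{ij}$. Next I would expand the genuinely fourth-order curvature term $\mathring{R}_{ij}\mathring{R}_{mk}R_{mijk}$ via the irreducible decomposition \eqref{00}, and write $R_{mj}=\mathring{R}_{mj}+\frac{R}{n}\delta_{mj}$, $R_{lk}=\mathring{R}_{lk}+\frac{R}{n}\delta_{lk}$ in the lower-order contractions. Trace-freeness of $\mathring{Ric}$ collapses the $\delta$-contracted Schouten pieces and produces the cubic invariant $\mathring{R}_{hj}\mathring{R}_{ij}\mathring{R}_{ih}$ together with scalar multiples of $|\mathring{Ric}|^2$; trace-freeness of $W$ kills the $\frac{R}{n}\delta_{lk}W_{likj}$ and $\frac{R}{n}\delta_{mk}W_{\cdots}$ contributions, so that only the two honest Weyl contractions survive. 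The key symmetry observation is that, using the antisymmetry of $W$ in its last two slots, $\mathring{R}_{ij}\mathring{R}_{mk}W_{mijk}=-\mathring{R}_{ij}\mathring{R}_{lk}W_{likj}$, which is precisely what merges the Weyl term coming from the curvature contraction with the one coming from the Bach-flat expression into the single coefficient $-2$.

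The final step is purely the arithmetic of collecting coefficients: the several $R_{,ij}$ contributions $\tfrac{n-2}{2n}+\tfrac1n-\tfrac1{2(n-1)}$ must collapse to $\tfrac{n-2}{2(n-1)}$, the cubic terms $\tfrac{2}{n-2}+1$ must combine to $\tfrac{n}{n-2}$, and the scalar terms $\tfrac{R}{n(n-1)}+\tfrac{R}{n}$ to $\tfrac{R}{n-1}$. I expect the main obstacle to be bookkeeping rather than conceptual: correctly tracking the index positions and signs when applying the Ricci identity and, especially, when invoking the Weyl symmetries to identify the two Weyl contractions, since a misplaced antisymmetry there would change the crucial factor of $-2$. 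Verifying that the three rational coefficients above really do collapse as stated is the concrete check that the computation has been carried out without error.
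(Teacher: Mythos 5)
Your proposal is correct and follows essentially the same route as the paper: the Bochner identity, the differentiated contracted second Bianchi identity \eqref{11004}, the Ricci commutation \eqref{11028} together with \eqref{11005}, the Bach-flat expression \eqref{11006}, and the curvature decomposition \eqref{00}, with all the stated coefficient collapses ($\tfrac{n-2}{2n}+\tfrac1n-\tfrac1{2(n-1)}=\tfrac{n-2}{2(n-1)}$, $\tfrac{2}{n-2}+1=\tfrac{n}{n-2}$, $\tfrac{1}{n(n-1)}+\tfrac1n=\tfrac{1}{n-1}$) and the Weyl antisymmetry argument for the factor $-2$ checking out. No gaps.
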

 \begin{proof}
 By using \eqref{00}, \eqref{11004}, \eqref{11005}, \eqref{11006} and the Ricci identity, we get
\begin{eqnarray*}
\frac{1}{2}\Delta |\mathring{Ric}|^2&=&|\nabla \mathring{Ric}|^2+\mathring{R}_{ij} \mathring{R}_{ij,kk}                    \\\label{}
%&=&   |\nabla \mathring{Ric}|^2+\sum_{i,j,k}\mathring{R}_{ij} (R_{ij,k}-\frac{R_{,k}}{n}g_{ij}     )_{,k}              \nonumber\\
%&=&   |\nabla \mathring{Ric}|^2+\sum_{i,j,k}\mathring{R}_{ij} (R_{ik,j}+R_{likj,l}-\frac{R_{,k}}{n}g_{ij}     )_{,k} \nonumber\\
&=&   |\nabla \mathring{Ric}|^2+\mathring{R}_{ij} (       \mathring{R}_{ik,j}+R_{likj,l}+\frac{R_{,j}}{n}\delta_{ik}-\frac{R_{,k}}{n}\delta_{ij}     )_{,k} \nonumber\\
&=&   |\nabla \mathring{Ric}|^2+\mathring{R}_{ij}        \mathring{R}_{ik,jk}             +\mathring{R}_{jk}\frac{R_{,jk}}{n}                +\mathring{R}_{ij}R_{likj,lk}   \nonumber\\
&=&   |\nabla \mathring{Ric}|^2+\mathring{R}_{ij}     (   \mathring{R}_{ik,kj}+ \mathring{R}_{hk}R_{hijk} + \mathring{R}_{ih}R_{hkjk}          )             +\mathring{R}_{jk}\frac{R_{,jk}}{n}                +\mathring{R}_{ij}R_{likj,lk} \nonumber\\
&=&   |\nabla \mathring{Ric}|^2+\frac{n-2}{2n} \mathring{R}_{ij} R_{,ij}+ \mathring{R}_{ij}\mathring{R}_{hk}[W_{hijk}+\frac{1}{n-2}(\mathring{R}_{hj}\delta_{ik}-\mathring{R}_{hk}\delta_{ij}+\mathring{R}_{ik}\delta_{hj}-\mathring{R}_{ij}\delta_{hk}) \nonumber\\
&&+\frac{R}{n(n-1)}(\delta_{hj}\delta_{ik}-\delta_{hk}\delta_{ij}) ]+\mathring{R}_{ij} \mathring{R}_{ih}(\mathring{R}_{hj} +\frac{R}{n}\delta_{hj}   )                   +\mathring{R}_{jk}\frac{R_{,jk}}{n}         \nonumber\\
&&        +\mathring{R}_{ij}[         \frac{1}{2(n-1)}(R_{,kk}\delta_{ij}-R_{,ij})-\mathring{R}_{lk}W_{likj}                     ] \nonumber\\
&=&   |\nabla \mathring{Ric}|^2+    \frac{n-2}{2(n-1)} \mathring{R}_{ij} R_{,ij}- 2 \mathring{R}_{ij}\mathring{R}_{lk}W_{likj}+\frac{n}{n-2}   \mathring{R}_{hj}\mathring{R}_{ij}\mathring{R}_{ih} +\frac{R}{n-1}|\mathring{Ric}|^2.
  \end{eqnarray*}
This completes the proof of Lemma 2.1.
\end{proof}

\begin{lem}\label{22002}
Let $(M^n,g)(n\geq4)$ be an n-dimensional  compact Bach-flat Riemannian manifold with positive constant $\sigma_2(A_g)$, then
\begin{eqnarray*}
0\geq \frac{n}{n-2}  \int_{M^n} \mathring{R}_{hj}\mathring{R}_{ij}\mathring{R}_{ih} +\frac{1}{n-1}\int_{M^n}R|\mathring{Ric}|^2- 2\int_{M^n}\mathring{R}_{ij}\mathring{R}_{lk}W_{likj}.
\end{eqnarray*}

\end{lem}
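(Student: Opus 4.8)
The plan is to integrate the pointwise identity \eqref{666777} of Lemma \ref{22001} over the closed manifold $M^n$ and then discard the two gradient integrals that appear, after checking that their sum is nonnegative. Since $M^n$ has no boundary, $\int_{M^n}\Delta|\mathring{Ric}|^2=0$, so integrating \eqref{666777} yields
\[
0=\int_{M^n}|\nabla\mathring{Ric}|^2+\frac{n-2}{2(n-1)}\int_{M^n}\mathring{R}_{ij}R_{,ij}+\frac{n}{n-2}\int_{M^n}\mathring{R}_{hj}\mathring{R}_{ij}\mathring{R}_{ih}+\frac{1}{n-1}\int_{M^n}R|\mathring{Ric}|^2-2\int_{M^n}\mathring{R}_{ij}\mathring{R}_{lk}W_{likj}.
\]
The last three integrals are precisely the right-hand side of the claimed inequality, so it suffices to prove that $\int_{M^n}|\nabla\mathring{Ric}|^2+\frac{n-2}{2(n-1)}\int_{M^n}\mathring{R}_{ij}R_{,ij}\ge 0$.

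For the second integral I would integrate by parts and invoke the contracted Bianchi identity \eqref{11005} in the form $\mathring{R}_{ij,j}=\frac{n-2}{2n}R_{,i}$ (using the symmetry of $\mathring{Ric}$), which gives
\[
\frac{n-2}{2(n-1)}\int_{M^n}\mathring{R}_{ij}R_{,ij}=-\frac{n-2}{2(n-1)}\int_{M^n}\mathring{R}_{ij,j}R_{,i}=-\frac{(n-2)^2}{4n(n-1)}\int_{M^n}|\nabla R|^2.
\]
Hence the whole lemma reduces to the \emph{pointwise} estimate $|\nabla\mathring{Ric}|^2\ge\frac{(n-2)^2}{4n(n-1)}|\nabla R|^2$, which I expect to be the main obstacle.

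To establish this pointwise estimate I would differentiate the constancy relation coming from \eqref{00011}, namely $\frac{(n-2)^2}{4n(n-1)}R^2-|\mathring{Ric}|^2=2\sigma_2(A_g)=\mathrm{const}$, obtaining $\frac{(n-2)^2}{4n(n-1)}RR_{,k}=\mathring{R}_{ij}\mathring{R}_{ij,k}$ for every $k$. Applying the Cauchy--Schwarz inequality in the indices $i,j$ for each fixed $k$ and summing over $k$ produces
\[
\left(\frac{(n-2)^2}{4n(n-1)}\right)^{2}R^2|\nabla R|^2\le|\mathring{Ric}|^2\,|\nabla\mathring{Ric}|^2.
\]
Here the \emph{positivity} of $\sigma_2(A_g)$ enters decisively: it forces $|\mathring{Ric}|^2=\frac{(n-2)^2}{4n(n-1)}R^2-2\sigma_2(A_g)<\frac{(n-2)^2}{4n(n-1)}R^2$, and in particular $R^2\ge 2\sigma_2(A_g)/\bigl(\tfrac{(n-2)^2}{4n(n-1)}\bigr)>0$ so $R$ vanishes nowhere. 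Dividing by $|\mathring{Ric}|^2$ where it is positive (and noting that wherever $\mathring{Ric}=0$ the displayed inequality forces $\nabla R=0$, since $R\neq0$) upgrades the estimate to $|\nabla\mathring{Ric}|^2\ge\frac{(n-2)^2}{4n(n-1)}|\nabla R|^2$ at every point of $M^n$.

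Integrating this pointwise estimate gives $\int_{M^n}|\nabla\mathring{Ric}|^2\ge\frac{(n-2)^2}{4n(n-1)}\int_{M^n}|\nabla R|^2$, which together with the integration-by-parts computation shows that the two gradient integrals have nonnegative sum; discarding them leaves exactly the asserted inequality. The delicate feature to highlight is that the coefficient $\frac{(n-2)^2}{4n(n-1)}$ produced by integration by parts coincides \emph{exactly} with the coefficient of $R^2$ in $\sigma_2(A_g)$, so that the positivity of $\sigma_2(A_g)$ buys precisely the slack needed to close the estimate; a bound on $\int_{M^n}|\nabla\mathring{Ric}|^2$ obtained merely from the trace constraint $\mathring{R}_{ij,j}=\frac{n-2}{2n}R_{,i}$ yields only the coefficient $\frac{(n-2)^2}{2n(n-1)(n+2)}$, which is strictly smaller than $\frac{(n-2)^2}{4n(n-1)}$ for $n\ge4$ and hence too weak.
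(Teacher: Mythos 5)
Your proof is correct and follows essentially the same route as the paper: integrate the identity of Lemma~\ref{22001} over the closed manifold, convert the Hessian term by parts using the contracted Bianchi identity \eqref{11005} into $-\frac{(n-2)^2}{4n(n-1)}\int_{M^n}|\nabla R|^2$, and absorb this into $\int_{M^n}|\nabla\mathring{Ric}|^2$ via the pointwise inequality $|\nabla\mathring{Ric}|^2\geq\frac{(n-2)^2}{4n(n-1)}|\nabla R|^2$, which is exactly the paper's \eqref{777888}. The only difference is that the paper obtains \eqref{777888} by quoting the Kato-type inequality $|\nabla A|^2\geq|\nabla \mathrm{tr}A|^2$ of Hu--Li--Simon, Li and Simon and translating it through \eqref{333444}--\eqref{444555}, whereas you reprove it directly by differentiating the constancy of $\sigma_2(A_g)$ and applying Cauchy--Schwarz, using positivity of $\sigma_2(A_g)$ to get $|\mathring{Ric}|^2<\frac{(n-2)^2}{4n(n-1)}R^2$ --- this is precisely the standard proof of the cited inequality, so your argument is a correct, self-contained version of the same one.
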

\begin{proof}
We compute
  \begin{eqnarray}
|\nabla A|^2&=&|\nabla Ric|^2-\frac{1}{n-1}|\nabla R|^2+\frac{|\nabla R|^2}{4(n-1)^2}n\label{333444}\\\nonumber
&=&|\nabla Ric|^2-\frac{3n-4}{4(n-1)^2}|\nabla R|^2\\\nonumber
&=&|\nabla \mathring{Ric}|^2+\frac{(n-2)^2}{4n(n-1)^2}|\nabla R|^2,
  \end{eqnarray}
and
\begin{equation}
|\nabla trA|^2=\frac{(n-2)^2}{4(n-1)^2}|\nabla R|^2.\label{444555}
\end{equation}
Since $\sigma_2(A_g)$ is a
positive constant,  the  inequality of Kato type due to Hu-Li-Simon \cite{HLS}, Li \cite{L} and Simon \cite{Simon}, i.e.,
\begin{equation}
|\nabla A|^2\geq|\nabla trA|^2\label{555666}
\end{equation}
holds. From \eqref{333444} and \eqref{444555}, \eqref{555666} implies that
\begin{equation}|\nabla \mathring{Ric}|^2\geq\frac{(n-2)^2}{4n(n-1)}|\nabla R|^2.\label{777888}
\end{equation}

Integrating \eqref{666777} by parts on $M^n$ and using \eqref{11005} and \eqref{777888} we have
\begin{eqnarray*}
0&=&\int_{M^n}|\nabla \mathring{Ric}|^2+     \frac{n-2}{2(n-1)} \int_{M^n}\mathring{R}_{ij} R_{,ij}- 2\int_{M^n} \mathring{R}_{ij}\mathring{R}_{lk}W_{likj}+\frac{n}{n-2}  \int_{M^n} \mathring{R}_{hj}\mathring{R}_{ij}\mathring{R}_{ih} +\int_{M^n}\frac{R}{n-1}|\mathring{Ric}|^2\\
&=&\int_{M^n}|\nabla \mathring{Ric}|^2-\frac{n-2}{2(n-1)} \int_{M^n}\mathring{R}_{ij,j} R_{,i}- 2\int_{M^n} \mathring{R}_{ij}\mathring{R}_{lk}W_{likj}+\frac{n}{n-2}  \int_{M^n} \mathring{R}_{hj}\mathring{R}_{ij}\mathring{R}_{ih} +\int_{M^n}\frac{R}{n-1}|\mathring{Ric}|^2\\
&=&\int_{M^n}|\nabla \mathring{Ric}|^2-     \frac{(n-2)^2}{4n(n-1)} \int_{M^n}|\nabla R|^2- 2\int_{M^n} \mathring{R}_{ij}\mathring{R}_{lk}W_{likj}+\frac{n}{n-2}  \int_{M^n} \mathring{R}_{hj}\mathring{R}_{ij}\mathring{R}_{ih} +\int_{M^n}\frac{R}{n-1}|\mathring{Ric}|^2\\
&\geq&- 2\int_{M^n} \mathring{R}_{ij}\mathring{R}_{lk}W_{likj}+\frac{n}{n-2}  \int_{M^n} \mathring{R}_{hj}\mathring{R}_{ij}\mathring{R}_{ih} +\int_{M^n}\frac{R}{n-1}|\mathring{Ric}|^2.\\
 \end{eqnarray*}
 This completes the proof of Lemma 2.2.
 \end{proof}

\begin{lem}\label{22003}
Let $(M^n,g)(n\geq4)$ be an n-dimensional   Riemannian manifold, then
$$\left|-W_{ijkl}\mathring{R}_{ik}\mathring{R}_{jl}+\frac{n}{2(n-2)}\mathring{R}_{ij}\mathring{R}_{jk}\mathring{R}_{ik}\right|
\leq\sqrt{\frac{n-2}{2(n-1)}}|\mathring{Ric}|^2\left(|W|^2+\frac{n}{2(n-2)}|\mathring{Ric}|^2\right)^{\frac{1}{2}}.$$
\end{lem}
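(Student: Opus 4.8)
The plan is to recognise the left-hand side as a single inner product on the direct sum of (i) the space of Weyl-type curvature tensors and (ii) the space of symmetric trace-free $2$-tensors, and then to apply \emph{one} Cauchy--Schwarz inequality whose second factor collapses to an exact constant. Throughout set $E=\mathring{Ric}$, so that $E_{ij}=\mathring{R}_{ij}$ is symmetric and trace-free; write $(E^2)_{ik}=E_{ij}E_{jk}$ for the matrix square and $\mathring{E^2}$ for its trace-free part.

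First I would turn both terms into genuine inner products. Since $\tr E=0$, the cubic term is $\mathring R_{ij}\mathring R_{jk}\mathring R_{ik}=\tr(E^3)=\langle E,\mathring{E^2}\rangle$. For the Weyl term I introduce the algebraic curvature tensor $T_{ijkl}:=\mathring R_{ik}\mathring R_{jl}-\mathring R_{il}\mathring R_{jk}$, which satisfies the first Bianchi identity; using the symmetries $W_{ijkl}=-W_{ijlk}=W_{klij}$ one finds $W_{ijkl}T_{ijkl}=2\,W_{ijkl}\mathring R_{ik}\mathring R_{jl}$, hence $W_{ijkl}\mathring R_{ik}\mathring R_{jl}=\tfrac12\langle W,W_T\rangle$, where $W_T$ is the totally trace-free (Weyl) part of $T$ in the splitting \eqref{00}; here one uses that $W$, being trace-free, is orthogonal to the Ricci and scalar parts of $T$.

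Next, fixing the weight $\beta^2=\tfrac{n}{2(n-2)}$, I would regard the expression inside the absolute value as the pairing of $v=(W,\beta E)$ with $u=\big(-\tfrac12 W_T,\ \tfrac{n}{2(n-2)\beta}\,\mathring{E^2}\big)$, so that Cauchy--Schwarz gives $|{-W_{ijkl}\mathring R_{ik}\mathring R_{jl}+\tfrac{n}{2(n-2)}\mathring R_{ij}\mathring R_{jk}\mathring R_{ik}}|\le|v|\,|u|$ with
\[
|v|^2=|W|^2+\frac{n}{2(n-2)}|\mathring{Ric}|^2,\qquad |u|^2=\frac14|W_T|^2+\frac{n}{2(n-2)}|\mathring{E^2}|^2 .
\]
The first factor is exactly the quantity under the square root in the statement, so the whole problem reduces to the identity $|u|^2=\frac{n-2}{2(n-1)}|\mathring{Ric}|^4$.

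This last identity is the crux and the one place where a real computation is unavoidable. I would compute the irreducible pieces of $T$: its Ricci contraction is $\sum_j T_{ijkj}=-(E^2)_{ik}$ (again using $\tr E=0$), with trace-free part $-\mathring{E^2}$ and scalar $R_T=-|\mathring{Ric}|^2$, while $|T|^2=2\big(|\mathring{Ric}|^4-\tr(E^4)\big)$. Feeding these into the norm formula $|T|^2=|W_T|^2+\frac{4}{n-2}|\mathring{E^2}|^2+\frac{2}{n(n-1)}R_T^2$ (the orthogonal splitting underlying \eqref{00}, applied to $T$) expresses $|W_T|^2$ as a linear combination of $|\mathring{Ric}|^4$ and the quartic invariant $\tr(E^4)$; when this is substituted into $|u|^2$, the $\tr(E^4)$ terms cancel identically, leaving precisely $\frac{n-2}{2(n-1)}|\mathring{Ric}|^4$. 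Verifying this cancellation — and the coefficient $\frac{(n-2)^2}{n-1}$ it produces — is the main obstacle; once it is established, Cauchy--Schwarz yields the assertion, with equality exactly when $W$ is proportional to $W_T$ and $E^2$ is proportional to $E$, matching the extremal configurations of both the Weyl contraction and the cubic trace estimate.
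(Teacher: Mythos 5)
Your argument is correct and is essentially the paper's own proof in different packaging: the paper writes the left-hand side as $-\frac14\bigl\langle W+\frac{n}{4(n-2)}\mathring{Ric}\circledwedge g,\ \mathring{Ric}\circledwedge\mathring{Ric}\bigr\rangle$, orthogonally decomposes $\mathring{Ric}\circledwedge\mathring{Ric}=T+V'+U'$, rescales the $V'$-component and applies one Cauchy--Schwarz, which is exactly your direct-sum pairing of $(W,\beta E)$ with $\bigl(-\tfrac12 W_T,\ c\,\mathring{E^2}\bigr)$ after contracting a pair of indices. Your key identity $|u|^2=\frac{n-2}{2(n-1)}|\mathring{Ric}|^4$ is the paper's $|T|^2+\frac n2|V'|^2=\frac{8(n-2)}{n-1}|\mathring{Ric}|^4$ in disguise, and the cancellation of the $\tr(E^4)$ terms that you flag as the main obstacle does go through with the stated coefficients.
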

\begin{remark}
We follow these proofs of Proposition 2.1 in \cite{C} and Lemma 4.7 in \cite{Bo} to prove this lemma which is proved in \cite{FP}. For completeness, we also write it out.
\end{remark}
\begin{proof}
First of all we have
$$(\mathring{Ric}\circledwedge g)_{ijkl}=\mathring{R}_{ik}g_{jl}-\mathring{R}_{il}g_{jk}+\mathring{R}_{jl}g_{ik}-\mathring{R}_{jk}g_{il},$$
$$(\mathring{Ric}\circledwedge \mathring{Ric})_{ijkl}=2(\mathring{R}_{ik}\mathring{R}_{jl}-\mathring{R}_{il}\mathring{R}_{jk}),$$
where $\circledwedge$ denotes the Kulkarni-Nomizu product.
An easy computation shows
$$W_{ijkl}\mathring{R}_{ik}\mathring{R}_{jl}=\frac{1}{4}W_{ijkl}(\mathring{Ric}\circledwedge \mathring{Ric})_{ijkl},$$
$$\mathring{R}_{ij}\mathring{R}_{jk}\mathring{R}_{ik}=-\frac{1}{8}(\mathring{Ric}\circledwedge g)_{ijkl}(\mathring{Ric}\circledwedge \mathring{Ric})_{ijkl}.$$
Hence we get the following equation
\begin{equation}
-W_{ijkl}\mathring{R}_{ik}\mathring{R}_{jl}+\frac{n}{2(n-2)}\mathring{R}_{ij}\mathring{R}_{jk}\mathring{R}_{ik}=-\frac{1}{4}\left(W+\frac{n}{4(n-2)}\mathring{Ric}\circledwedge g\right)_{ijkl}(\mathring{Ric}\circledwedge \mathring{Ric})_{ijkl}.\label{87654}
\end{equation}
Since $\mathring{Ric}\circledwedge \mathring{Ric}$ has the same symmetries with the Riemannian curvature tensor, it can be orthogonally decomposed as
$$\mathring{Ric}\circledwedge \mathring{Ric}=T+V'+U'.$$
Here $T$ is totally trace-free, and
$$V'_{ijkl}=-\frac{2}{n-2}\left(\mathring{Ric}^2\circledwedge g\right)_{ijkl}+\frac{2}{n(n-2)}|\mathring{Ric}|^2(g\circledwedge g)_{ijkl},$$
$$U'_{ijkl}=-\frac{1}{n(n-1)}|\mathring{Ric}|^2(g\circledwedge g)_{ijkl},$$
where $\left(\mathring{Ric}^2\right)_{ik}=\mathring{R}_{ip}\mathring{R}_{kp}$. Taking the squared norm we obtain
$$|\mathring{Ric}\circledwedge \mathring{Ric}|^2=8|\mathring{Ric}|^4-8|\mathring{Ric}^2|^2,$$
$$|V'|^2=\frac{16}{n-2}|\mathring{Ric}^2|^2-\frac{16}{n(n-2)}|\mathring{Ric}|^4,$$
$$|U'|^2=\frac{8}{n(n-1)}|\mathring{Ric}|^4.$$
In particular, one has
\begin{equation}
|T|^2+\frac{n}{2}|V'|^2=|\mathring{Ric}\circledwedge \mathring{Ric}|^2+\frac{n-2}{2}|V'|^2-|U'|^2=\frac{8(n-2)}{n-1}|\mathring{Ric}|^4.\label{98765}
\end{equation}
We now estimate the right hand side of \eqref{87654}. Using  \eqref{98765}, Cauchy-Schwarz inequality and  the fact that $W$ and $T$ are totally trace-free we obtain
\begin{eqnarray*}
\left|\left(W+\frac{n}{4(n-2)}\mathring{Ric}\circledwedge g\right)_{ijkl}(\mathring{Ric}\circledwedge \mathring{Ric})_{ijkl}\right|^2
&=&\left|\left(W+\frac{n}{4(n-2)}\mathring{Ric}\circledwedge g\right)_{ijkl}(T+V')_{ijkl}\right|^2\\
&=&\left|\left(W+\frac{\sqrt{2n}}{4(n-2)}\mathring{Ric}\circledwedge g\right)_{ijkl}\left(T+\sqrt{\frac{n}{2}}V'\right)_{ijkl}\right|^2\\
&\leq&\left|W+\frac{\sqrt{2n}}{4(n-2)}\mathring{Ric}\circledwedge g\right|^2\left(|T|^2+\frac{n}{2}|V'|^2\right)\\
&=&\frac{8(n-2)}{n-1}|\mathring{Ric}|^4\left(|W|^2+\frac{n}{2(n-2)}|\mathring{Ric}|^2\right).
\end{eqnarray*}
This estimate together with \eqref{87654} concludes this proof.
\end{proof}

\begin{proof}[\textbf{Proof of Theorem \ref{000001}}]
By (1.2), the pinching condition (1.3) in Theorem 1.1 is equivalent to
\begin{eqnarray}
|W|^2+\frac{n}{2(n-2)}|\mathring{Ric}|^2<\frac{R^2}{2(n-1)(n-2)}.\label{300001}
  \end{eqnarray}
By Lemma \ref{22002} and Lemma \ref{22003}, we obtain
\begin{equation}
0\geq \int_{M^n}\left[\frac{R}{n-1}-2\sqrt{\frac{n-2}{2(n-1)}}\left(|W|^2+\frac{n}{2(n-2)}|\mathring{Ric}|^2\right)^{\frac{1}{2}}\right]|\mathring{Ric}|^2.\label{3000002}
\end{equation}
Combining \eqref{300001} with \eqref{3000002}, we get that $\mathring{Ric}=0$, i.e., $M^n$ is an Einstin manifold. We finish the proof of Theorem \ref{000001}.
\end{proof}

\begin{proof}[\textbf{Proof of Corollary 1.3}]
When $n=4$, the pinching condition (1.3) in Theorem 1.1 is reduced to  (1.4) in Corollary 1.3. By Theorem 1.1, $M^4$ is Einstein. Thus by (1.2), (1.4) is equivalent to
\begin{eqnarray}
|W|^2< \frac{R^2}{12}.\label{30001}
\end{eqnarray}
By Theorem 1.8 in \cite{F}, we obtain that
$M^4$ is isometric to a quotient of the round $\mathbb{S}^4$. We finish the proof of Theorem \ref{000001}.
\end{proof}

\begin{proof}[\textbf{Proof of Corollary \ref{000002}}]
According to $\sigma_2(A_g)$ is a positive constant and the fact that a locally conformally flat manifold is also a Bach-flat manifold,   $(M^4,g)$ satisfies condition \eqref{77777} in Corollary  \ref{00003}, then we know that  $M^4$ is isometric to $\mathbb{S}^4$. Hence we complete the proof of Corollary \ref{000002}.
\end{proof}

\begin{proof}[\textbf{Proof of Corollary \ref{00000004}}]
From \eqref{1111121} and  \eqref{00011}, we can easily get
\begin{eqnarray*}
|\mathring{Ric}|^2< \frac{8}{n(n-4)}\sigma_2(A_g).
\end{eqnarray*}
Since $(M^n,g)$ is a compact locally conformally flat manifold, we have $|W|=0$, then $(M^n,g)$ satisfies condition \eqref{11111} in Theorem \ref{000001}. According to the fact that a locally conformally flat manifold is also a Bach-flat manifold, by use of Theorem \ref{000001}, we know that $M^n$ is an Einstein manifold. Then we can get the conclusion that $(M^n,g)$ is isometric to a quotient of the round $\mathbb{S}^n$. Hence we complete the proof of Corollary \ref{00000004}.
\end{proof}

From the proofs of Lemma 2.2 and Theorem 1.1, we have
\begin{prop}\label{76543}
Let $(M^n, g)(n\geq4)$  be an $n$-dimensional   compact Bach-flat manifold  with  positive scalar curvature. If
\begin{equation}
\int_{M^n}|\nabla \mathring{Ric}|^2\geq\frac{(n-2)^2}{4n(n-1)}\int_{M^n}|\nabla R|^2,\label{65432}
\end{equation}
and
\begin{equation}
|W|^2+\frac{n}{2(n-2)}|\mathring{Ric}|^{2}
<\frac{1}{{2(n-2)(n-1)}}R^2,
\end{equation}
then $M^n$ is an Einstein manifold.
\end{prop}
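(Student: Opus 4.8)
The plan is to re-run the arguments of Lemma \ref{22002} and of the proof of Theorem \ref{000001} almost verbatim, after observing that the hypothesis ``$\sigma_2(A_g)$ is a positive constant'' was used in those proofs only to produce, via the Kato-type inequality \eqref{555666}, the pointwise bound \eqref{777888}, whose sole role after integration was to guarantee $\int_{M^n}|\nabla\mathring{Ric}|^2 \geq \frac{(n-2)^2}{4n(n-1)}\int_{M^n}|\nabla R|^2$. Here that very inequality is imposed directly as \eqref{65432}, and positive scalar curvature replaces positive constant $\sigma_2(A_g)$, so the same chain of estimates should go through.

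First I would integrate the Bochner-type identity \eqref{666777} of Lemma \ref{22001} over the compact manifold $M^n$. The term $\frac{n-2}{2(n-1)}\int_{M^n}\mathring{R}_{ij}R_{,ij}$ is treated by integration by parts, which moves a derivative onto $\mathring{R}_{ij}$ and, upon using the contracted second Bianchi identity \eqref{11005} in the form $\mathring{R}_{ij,j}=\frac{n-2}{2n}R_{,i}$, converts it into $-\frac{(n-2)^2}{4n(n-1)}\int_{M^n}|\nabla R|^2$. This reproduces the integrated identity from the proof of Lemma \ref{22002}, namely $0=\int_{M^n}|\nabla\mathring{Ric}|^2 -\frac{(n-2)^2}{4n(n-1)}\int_{M^n}|\nabla R|^2 -2\int_{M^n}\mathring{R}_{ij}\mathring{R}_{lk}W_{likj} +\frac{n}{n-2}\int_{M^n}\mathring{R}_{hj}\mathring{R}_{ij}\mathring{R}_{ih} +\frac{1}{n-1}\int_{M^n}R|\mathring{Ric}|^2$. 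At this point, rather than invoking the pointwise \eqref{777888}, I would apply the standing hypothesis \eqref{65432} to discard the first two gradient terms, whose combination is nonnegative, thereby recovering exactly the conclusion of Lemma \ref{22002}.

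With that inequality in hand, the remaining argument is identical to the proof of Theorem \ref{000001}. I would estimate the cubic-plus-Weyl contribution $\frac{n}{n-2}\mathring{R}_{hj}\mathring{R}_{ij}\mathring{R}_{ih}-2\mathring{R}_{ij}\mathring{R}_{lk}W_{likj}$ pointwise by Lemma \ref{22003}, arriving at the integral inequality $0\geq\int_{M^n}\left[\frac{R}{n-1}-2\sqrt{\frac{n-2}{2(n-1)}}\left(|W|^2+\frac{n}{2(n-2)}|\mathring{Ric}|^2\right)^{1/2}\right]|\mathring{Ric}|^2$, which is precisely \eqref{3000002}. Since $R>0$, squaring shows the bracketed factor is strictly positive exactly when $|W|^2+\frac{n}{2(n-2)}|\mathring{Ric}|^2<\frac{R^2}{2(n-1)(n-2)}$, i.e. under the second pinching hypothesis. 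The integrand is then nonnegative while its integral is nonpositive, forcing $\mathring{Ric}\equiv0$, so that $M^n$ is Einstein.

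The only place where any genuine thought is required is the bookkeeping that isolates where constancy of $\sigma_2(A_g)$ actually entered the original proofs: it appears solely through the Kato inequality \eqref{555666}, and only in its integrated form. Once this is recognized there is no real obstacle, since replacing \eqref{777888} by the assumed \eqref{65432} leaves every other ingredient — the Bochner formula of Lemma \ref{22001}, the integration by parts via \eqref{11005}, the algebraic estimate of Lemma \ref{22003}, and the final sign analysis using $R>0$ — completely unchanged.
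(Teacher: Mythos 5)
Your proposal is correct and coincides with the paper's intended argument: the paper derives Proposition \ref{76543} precisely by noting that the constancy of $\sigma_2(A_g)$ entered only through the integrated form of the Kato-type inequality, which is now assumed directly as \eqref{65432}, after which the proofs of Lemma \ref{22002} and Theorem \ref{000001} carry over verbatim. No gaps.
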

\begin{remark}
For  Bach-flat manifolds  with positive constant scalar curvature, \eqref{65432} naturally holds. Proposition \ref{76543} improves Theorem 3 in \cite{FP}.
\end{remark}

\end{document}